\theoremstyle{plain}
\newtheorem{Thm}{Theorem}[section]
\newtheorem{Lem}[Thm]{Lemma}
\newcommand{\End}{\text{End}\,}
\begin{document}

\title{Positive laws on generators in powerful pro-$p$ groups}
\author{Cristina Acciarri}

\address{\textnormal{Cristina Acciarri,}\\ Dipartimento di Matematica Pura ed Applicata\\
Universit\`a degli Studi  dell'Aquila\\
I-67010 Coppito, L'Aquila (Italy)}
\email{acciarricristina@yahoo.it}

\author{Gustavo A. Fern\'andez-Alcober}
\address{ \textnormal{Gustavo A. Fern\'andez-Alcober,} Matematika Saila\\ Euskal Herriko Unibertsitatea
\\ 48080 Bilbao (Spain)}
\email{gustavo.fernandez@ehu.es}


\keywords{Positive laws, powerful pro-$p$ groups, verbal subgroups.}

\begin{abstract}
If $G$ is a finitely generated powerful pro-$p$ group satisfying a certain law $v\equiv 1$, and if $G$
can be generated by a normal subset $T$ of finite width which satisfies a positive law, we prove that
$G$ is nilpotent.
Furthermore, the nilpotency class of $G$ can be bounded in terms of the prime $p$, the number of
generators of $G$, the law $v\equiv 1$, the width of $T$, and the degree of the positive law.
The main interest of this result is the application to verbal subgroups: if $G$ is a $p$-adic analytic
pro-$p$ group in which all values of a word $w$ satisfy  positive law, and if the verbal subgroup
$w(G)$ is powerful, then $w(G)$ is nilpotent.
\end{abstract}

\maketitle

\section{Introduction}

If $\alpha$ and $\beta$ are two group words, we say that a group $G$ satisfies the
\emph{law} $\alpha\equiv\beta$ if every substitution of elements of $G$ by the variables
gives the same value for $\alpha$ and for $\beta$.
If the words $\alpha$ and $\beta$ are positive, i.e.\ if they do not involve any
inverses of the variables, then we say that $\alpha\equiv\beta$ is a
\emph{positive law}.
We can similarly speak about a law holding on a subset $T$ of $G$, if we only
substitute elements of $T$ by the variables.
Groups satisfying a positive law have received special attention in the past decade.
The main result is due to Burns and Medvedev, who proved in Ref.\ \cite{Burns}
that a locally graded group $G$ satisfies a positive law if and only if $G$
is nilpotent-by-(locally finite of finite exponent).
This applies in particular to residually finite groups.

A similar kind of problem has been considered by Shumyatsky and the second author
in Ref.~\cite{Fernandez-Alcober}.
If $G$ is a finitely generated group and $T$ is a set of generators satisfying a
positive law, they ask whether the whole of $G$ will also satisfy a (possibly different)
positive law, provided that $T$ is sufficiently large in some sense.
In this direction, they obtain a positive answer if $T$ is a normal subset of $G$
which is closed under taking commutators of its elements (\emph{commutator-closed}
for short), under the assumption that $G$ satisfies an arbitrary law and is residually-$p$
for some prime $p$.
More precisely, the result is proved for all primes outside a finite set $P(n)$
depending only on the \emph{degree} $n$ of the law (that is, the maximum of the
lengths of $\alpha$ and $\beta$).

The result in the previous paragraph can be applied to verbal subgroups $w(G)$
in a group $G$, where $T$ is considered to be the set $G_w$ of all values of the
word $w$ in $G$.
Note that $G_w$ is always a normal subset.
Among other results, Shumyatsky and the second author prove that, if $G$ is a
$p$-adic analytic pro-$p$ group and $p\not\in P(n)$ then, for every word $w$ such
that $G_w$ is commutator-closed, a positive law on $G_w$ implies a positive law on
the whole of $w(G)$.
Now two questions naturally arise: (i) can we get rid of the restriction
$p\not\in P(n)$?; (ii) can we get rid of the condition that $G_w$ should be
commutator-closed?
If we can give a positive answer to both these questions, then the result will hold
in $p$-adic analytic pro-$p$ groups for all primes and for all words.

If $G$ is a $p$-adic analytic pro-$p$ group, Jaikin-Zapirain has proved
(see Theorem 1.3 of Ref.~\cite{Jaikin}) that the set $G_w$ has finite width for every
word $w$, and then, by Proposition 4.1.2 of Ref.~\cite{Segal}, the verbal subgroup $w(G)$
is closed in $G$.
(See Section \ref{unipotent actions} for the definition of width.)
Thus $w(G)$ is again a $p$-adic analytic pro-$p$ group and, according to
Interlude A of Ref.\ \cite{DDMS}, it contains a powerful subgroup of finite index.
One of the main results of this paper is the solution of the problem raised in the last
paragraph in the case when $w(G)$ itself is powerful.

\begin{Thm}
\label{for verbal}
Let $G$ be a $p$-adic analytic pro-$p$ group, and let $w$ be any word.
If all values of $w$ in $G$ satisfy a positive law and the verbal subgroup $w(G)$ is powerful,
then $w(G)$ is nilpotent.
\end{Thm}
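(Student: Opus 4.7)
The plan is to reduce Theorem~\ref{for verbal} to the paper's main structural result, stated in the abstract: a finitely generated powerful pro-$p$ group $H$ which satisfies some law $v\equiv 1$ and is generated by a normal subset of finite width satisfying a positive law is nilpotent. I would apply this with $H = w(G)$ and $T = G_w$.

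The first step is to assemble the hypotheses for $w(G)$. Since $G$ is $p$-adic analytic, Jaikin-Zapirain's theorem (cited in the introduction) implies that $G_w$ has finite width in $G$, and then Proposition~4.1.2 of \cite{Segal} gives that $w(G)$ is closed in $G$. As a closed subgroup of a $p$-adic analytic pro-$p$ group, $w(G)$ is itself $p$-adic analytic, and in particular a finitely generated pro-$p$ group. By hypothesis $w(G)$ is powerful. The set $T = G_w$ is normal in $G$ (hence in $w(G)$), generates $w(G)$ by definition of the verbal subgroup, has finite width, and satisfies a positive law by assumption.

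The remaining hypothesis to verify is that $w(G)$ itself satisfies some law $v \equiv 1$. Here I would invoke the standard fact that every finitely generated $p$-adic analytic pro-$p$ group satisfies a group law; this can be seen via the existence of an open uniform subgroup together with the Lazard correspondence, which furnishes a law on the uniform subgroup that can then be promoted to a law on the whole group. With the auxiliary law available, every hypothesis of the main theorem is met for $H = w(G)$ and $T = G_w$, and the conclusion is exactly the nilpotency of $w(G)$ asserted in Theorem~\ref{for verbal}.

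The principal conceptual obstacle in this plan is supplying the law for $w(G)$, which is the only step that draws on machinery outside the paper proper. Everything else is a matter of recognizing that the main theorem, as formulated — with its hypothesis of an ambient law $v\equiv 1$ alongside a positive law on a normal generating subset of finite width — has been arranged precisely to accommodate this verbal-subgroup application, once the finite-width and closure results for values of $w$ in $p$-adic analytic pro-$p$ groups are in place.
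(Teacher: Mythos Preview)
Your reduction to Theorem~\ref{for general} is the right idea, and the verification that $w(G)$ is a finitely generated powerful pro-$p$ group with $G_w$ a normal generating set of finite width is fine. The gap is in the step where you claim that $w(G)$ satisfies some law $v\equiv 1$ simply because it is $p$-adic analytic. This is false: the first congruence subgroup of $SL_2(\mathbb{Z}_p)$ (for $p$ odd) is a uniform pro-$p$ group which, by the Tits alternative for linear groups, contains a nonabelian free subgroup and therefore satisfies no nontrivial group law. The Lazard correspondence identifies a uniform group with a $\mathbb{Z}_p$-Lie algebra, but it does not produce a group identity; neither uniformity nor finite rank forces a law.

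The paper obtains the required law by a different, elementary device that uses the hypotheses already at hand rather than extra structure theory. If $\alpha\equiv\beta$ is the positive law satisfied by $G_w$, with $k$ variables, and $w$ has $\ell$ variables, then substituting $w$-values $w(g_1,\ldots,g_\ell),\ldots,w(g_{(k-1)\ell+1},\ldots,g_{k\ell})$ into $\alpha\equiv\beta$ yields an identity in $k\ell$ variables that holds for \emph{all} elements of $G$. This is a genuine law $v\equiv 1$ on $G$, hence on $w(G)$, and now Theorem~\ref{for general} applies. So the missing idea is precisely this composition trick: the auxiliary law is manufactured from the positive law on the word values, not from analyticity.
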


Observe that the conclusion in the previous theorem that $w(G)$ is nilpotent is actually stronger
than $w(G)$ satisfying a positive law.

Following the approach of Ref.~\cite{Fernandez-Alcober}, we obtain Theorem \ref{for verbal} from
a more general result not involving directly word values.
In this case, we work with $G$ a finitely generated powerful pro-$p$ group for an arbitrary
prime $p$, and the set of generators $T$ has to be normal and of finite width, but not necessarily
commutator-closed.
Recall that, as mentioned above, if $G$ is a $p$-adic analytic pro-$p$ group, then $G_w$ has finite
width for every word $w$.

\begin{Thm}
\label{for general}
Let $G$ be a powerful $d$-generator pro-$p$ group which satisfies a certain law $v\equiv 1$.
Suppose that $G$ can be generated by a normal subset $T$ of width $m$ that satisfies a
positive law of degree $n$.
Then $G$ is nilpotent of bounded class.
\end{Thm}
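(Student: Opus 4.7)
The plan is to analyze the adjoint action of $G$ on the graded Lie algebra of its lower central series, use the positive law to control the action of individual elements of $T$ as linear operators, and then combine this with the width hypothesis and the ambient law $v\equiv 1$ to pin down the action of all of $G$. Since $G$ is a finitely generated powerful pro-$p$ group, the structure theory of Ref.~\cite{DDMS} gives that $G$ has finite rank $r\leq d$, each factor $M_i=\gamma_i(G)/\gamma_{i+1}(G)$ is a finitely generated $\mathbb{Z}_p$-module of rank at most $r$, and the $\mathbb{Q}_p$-vector space $V=\bigoplus_i M_i\otimes\mathbb{Q}_p$ is the Lie algebra of the $p$-adic analytic group $G$, of dimension at most $d$. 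It is therefore enough to bound the nilpotency class of the image of $G$ in $\operatorname{GL}(V)$ under the adjoint representation.

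The central step is to show that every $t\in T$ acts on $V$ as a unipotent operator with nilpotency index bounded in terms of $p,d,v,n$. Exploiting the normality of $T$ to specialize variables, I would linearize the positive law $\alpha\equiv\beta$ of degree $n$ into a single polynomial identity $Q(t)=0$ of degree depending only on $n$, satisfied by every $t\in T$ acting on $V$. The global law $v\equiv 1$ then forces the eigenvalues of each such $t$ to be roots of unity of orders bounded in terms of $v$, and intersecting the finite root set of $Q$ with these admissible roots of unity should leave only the eigenvalue $1$. This produces $N=N(p,d,v,n)$ with $(t-\mathrm{id})^N=0$ on $V$ for every $t\in T$.

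To pass from $T$ to $G$, the width hypothesis takes over: every $g\in G$ is a product of at most $m$ elements of $T\cup T^{-1}$. Running the analogous polynomial-identity and eigenvalue analysis on arbitrary such products of length $\leq m$, and once again using $v\equiv 1$ to exclude the non-trivial eigenvalues that may appear when multiplying unipotents, every $g\in G$ turns out to be unipotent on $V$ with nilpotency index bounded in terms of $p,d,v,m,n$. Kolchin's theorem then yields nilpotency of the image of $G$ in $\operatorname{GL}(V)$ of class at most $\dim V-1\leq d-1$, and bounding the kernel of the adjoint representation (which is powerful of bounded rank and essentially central) gives a uniform bound on the nilpotency class of $G$ itself.

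The main obstacle will be converting the purely combinatorial hypothesis of a positive law on the normal set $T$ into clean linear-algebraic information, namely the uniform unipotency of the adjoint action of each $t\in T$. This requires using the normality of $T$ to reduce the multi-variable positive law to a single-variable polynomial identity, and a delicate interplay with $v\equiv 1$ to exclude non-trivial roots of unity among the eigenvalues: a product of two unipotents is in general not unipotent (already in $\operatorname{GL}_2$), so the positive law must do real work beyond merely making each individual $t\in T$ unipotent.
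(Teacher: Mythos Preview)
Your plan has a structural problem at the very first step: the conjugation action of $G$ on the associated graded $\bigoplus_i \gamma_i(G)/\gamma_{i+1}(G)$ is \emph{trivial}, since $[\gamma_i(G),G]\subseteq\gamma_{i+1}(G)$. Thus every $g\in G$ already acts as the identity on your $V$, and ``unipotency on $V$'' carries no information whatsoever about the nilpotency class of $G$. You seem to be conflating this graded object with the Lie algebra $L(G)$ of $G$ as a $p$-adic analytic group (on which the genuine adjoint representation lives); but even after making that correction, two of your key assertions fail. First, a law $v\equiv 1$ does \emph{not} force eigenvalues of $\mathrm{Ad}(g)$ to be roots of unity: the metabelian group $\mathbb{Z}_p\rtimes_{1+p}\mathbb{Z}_p$ satisfies a law, yet the generator of the acting copy of $\mathbb{Z}_p$ has eigenvalue $1+p$ on the Lie algebra. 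Second, you yourself flag the real difficulty---passing from unipotency of each $t\in T$ to unipotency of a product of $m$ of them---and then do not resolve it; your phrase ``running the analogous \ldots analysis'' hides exactly the step where unipotents can multiply to a non-unipotent, and nothing in your outline prevents this.

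The paper's proof proceeds quite differently. It reduces to finite $p$-groups, then uses the law $v\equiv 1$ only through Black's theorem to bound the derived length, and argues by induction on that. The abelian sections on which one acts are not the lower-central quotients (where the action is trivial) but rather $\gamma_{k+1}(G)/\gamma_{k+1}(G)'$, where the action is nontrivial and where the elements of the auxiliary generating set $T_k$ of $\gamma_k(G)$ commute pairwise. On such a section the positive law gives a fixed monic $f\in\mathbb{Z}[X]$ with $f(t)=0$ for all $t\in T$; the width bound lets one pass to a single monic $h$ with $h(g)=0$ for all $g\in G$ via an integrality argument in $\mathbb{Z}[X_1,\dots,X_m]/(f(X_1),\dots,f(X_m))$, and a lemma of Semple--Shalev then extracts from $h$ a bounded Engel condition $[A,_n g^k]=1$. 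This is how the passage from $T$ to all of $G$ is actually accomplished; it is a commutative-algebra trick, not an eigenvalue argument, and it does not require (or imply) that eigenvalues be roots of unity.
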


Here, and in the remainder of the paper, when we say that a certain invariant of a group
is bounded, we mean that it is bounded above by a function of the parameters appearing
in the statement of the corresponding result.
Thus, in Theorem \ref{for general}, the nilpotency class of $G$ is bounded in terms of
the prime $p$, the number $d$ of generators of $G$, the law $v\equiv 1$, the width $m$ of $T$,
and the degree $n$ of the positive law.
If we want to make explicit the set $S$ of parameters in terms of which a certain
quantity is bounded, then we will use the expression `$S$-bounded'.

We want to remark that, contrary to what happens in Theorem \ref{for general}, we cannot
guarantee that the nilpotency class of $w(G)$ is bounded in Theorem \ref{for verbal}.
The reason is that we are using the above-mentioned result of Jaikin-Zapirain, which
provides the finite width of $G_w$, but not bounded width for that set.

\section{The action on abelian normal sections}
\label{unipotent actions}

Our first step is to translate the positive law on the normal generating set $T$ into
a condition about the action of the elements of $T$ on the abelian normal sections of $G$.
More precisely, we have the following consequence of Lemma 2.1 in Ref.~\cite{Fernandez-Alcober}.
(Let $f(X)$ be the product of the polynomials $f_1(X)$ and $f_{-1}(X)$ in the statement of that
lemma.)

\begin{Lem}
\label{action on abelian}
Let $T$ be a normal subset of a group $G$, and assume that $T$ satisfies a positive law of
degree $n$.
Then there exists a monic polynomial $f(X)\in\mathbb{Z}[X]$ of degree $2n$, depending only on the
given positive law, which satisfies the following property:
if $A$ is an abelian normal section of $G$, then $f(t)$, viewed as an endomorphism
of $A$, is trivial for every $t\in T\cup T^{-1}$.
\end{Lem}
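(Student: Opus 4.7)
My plan is to deduce the statement directly from the version of Lemma 2.1 in Ref.~\cite{Fernandez-Alcober} indicated by the parenthetical remark. That lemma, applied to the normal subset $T$ and the given positive law of degree $n$, produces \emph{two} monic polynomials $f_1(X),f_{-1}(X)\in\mathbb{Z}[X]$, each of degree $n$ and depending only on the law, with the property that for any abelian normal section $A$ of $G$ the endomorphism $f_1(t)$ of $A$ induced by conjugation vanishes for every $t\in T$, while $f_{-1}(t)$ vanishes for every $t\in T^{-1}$. Morally, $f_1$ handles elements of $T$ and $f_{-1}$ handles their inverses, which need not themselves lie in $T$.

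The remaining task is to merge these into a single annihilating polynomial for all of $T\cup T^{-1}$. The natural candidate is $f(X):=f_1(X)f_{-1}(X)$: as a product of two monic degree-$n$ polynomials in $\mathbb{Z}[X]$, it is monic of degree $2n$, and it continues to depend only on the positive law. To verify the annihilation property on an arbitrary abelian normal section $A$, fix $t\in T\cup T^{-1}$ and let $\bar t\in\End A$ denote the endomorphism given by conjugation by $t$. The two factors $f_1(\bar t)$ and $f_{-1}(\bar t)$ commute, since both are polynomial expressions in the single operator $\bar t$; hence $f(\bar t)=f_1(\bar t)f_{-1}(\bar t)=f_{-1}(\bar t)f_1(\bar t)$. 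In the first factorisation the factor $f_1(\bar t)$ vanishes when $t\in T$, and in the second $f_{-1}(\bar t)$ vanishes when $t\in T^{-1}$, so $f(\bar t)=0$ in either case.

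I do not anticipate any real obstacle: all the substantive work is packaged in the cited lemma, and what remains is the mechanical observation that the product of two commuting polynomial-in-$\bar t$ annihilators still annihilates $A$ whenever either factor does. The only point deserving a moment's care is to note that commutativity of $f_1(\bar t)$ and $f_{-1}(\bar t)$ is automatic because they are built from a single endomorphism; if one tried to mix in other elements of $T$ here, this step would fail.
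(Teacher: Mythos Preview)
Your proposal is correct and matches the paper's approach exactly: the paper simply records, in a parenthetical remark, that one should take $f(X)=f_1(X)f_{-1}(X)$ with $f_1,f_{-1}$ as in Lemma~2.1 of Ref.~\cite{Fernandez-Alcober}. Your write-up in fact supplies slightly more detail than the paper does, by explicitly noting why the product annihilates $A$ for every $t\in T\cup T^{-1}$.
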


If $T$ is a subset of a group $G$, we say that $T$ has \emph{finite width}
if there exists a positive integer $m$ such that every element of the
subgroup $\langle T \rangle$ can be expressed as a product of no more than
$m$ elements of $T\cup T^{-1}$.
The smallest possible value of $m$ is then called the \emph{width} of $T$.

In our next theorem, we show how some properties of the generating set $T$ of $G$
are hereditary for the natural generating set of $\gamma_k(G)$ which can be
constructed from $T$.
For simplicity, if $A=K/L$ is a normal section of a group $G$, we say that two
elements $g,h\in G$ commute modulo $A$ if $gL$ and $hL$ commute modulo $A$
(or, equivalently, if $g$ and $h$ commute modulo $K$).

\begin{Thm}
\label{Tk of finite width}
Let $G$ be a $d$-generator finite $p$-group, and let $T$ be a normal
generating set of $G$.
Then
\[
T_k = \{ [t_1,\ldots,t_k] \mid t_i\in T \}
\]
is a normal generating set of $\gamma_k(G)$, and furthermore:
\begin{itemize}
\item[(i)]
If $T$ has finite width $m$, then the width of $T_k$ is at most $md^{(k-1)}$.
\item[(ii)]
If $T$ satisfies a positive law of degree $n$, then there exists a monic
polynomial $h(X)\in\mathbb{Z}[X]$ of $n$-bounded degree such that $h(t_k)$ annihilates
$\gamma_{k+1}(G)/\gamma_{k+1}(G)'$ for every $t_k\in T_k\cup T_k^{-1}$.
\end{itemize}
\end{Thm}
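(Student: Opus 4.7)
The plan is to establish the three sub-claims in turn.

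First, I verify that $T_k$ is a normal generating set of $\gamma_k(G)$. The $G$-invariance of $T_k$ follows from $[t_1,\ldots,t_k]^g=[t_1^g,\ldots,t_k^g]$ together with the normality of $T$. That $\gamma_k(G)=\langle T_k\rangle$ is standard: $\gamma_k(G)$ is normally generated by $k$-fold commutators of any generating set of $G$, and one can convert $T^{-1}$-arguments into $T$-arguments via standard commutator identities (such as $[a^{-1},b]=([a,b]^{a^{-1}})^{-1}$), rewriting everything as a product of elements in $T_k\cup T_k^{-1}$.

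For (i), I would induct on $k$. The base case is given. For the step, select a generating $d$-tuple $g_1,\ldots,g_d\in T$ of $G$, using the Burnside basis theorem for the finite $p$-group $G$. The identity $[s_1\cdots s_r,g]=\prod_i[s_i,g]^{c_i}$ lets one expand a commutator $[u,g_j]$, with $u\in\gamma_{k-1}(G)$ of $T_{k-1}^{\pm1}$-length at most $md^{k-2}$ by the induction hypothesis, as a product of $\leq md^{k-2}$ factors, each a single element of $T_k\cup T_k^{-1}$ by $G$-invariance of $T_k$ (plus standard identities handling $s_i\in T_{k-1}^{-1}$). Since modulo $\gamma_{k+1}(G)$ the commutator is bilinear, any element of $\gamma_k(G)/\gamma_{k+1}(G)$ is a product of at most $d$ commutators $[u_j,g_j]$, giving $d\cdot md^{k-2}=md^{k-1}$ factors. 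Descending along the lower central series absorbs the $\gamma_{k+1}(G)$-correction and yields the width bound.

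For (ii), set $M=\gamma_{k+1}(G)/\gamma_{k+1}(G)'$, an abelian normal section of $G$. Lemma~\ref{action on abelian} provides a monic $f\in\mathbb{Z}[X]$ of degree $2n$ with $f(t)M=0$ for every $t\in T\cup T^{-1}$. For $t_k=[t_1,\ldots,t_k]\in T_k$, I would work in the group ring $\mathbb{Z}[G]$ and use the iterated identity
\[
[u,v]-1=u^{-1}v^{-1}\bigl((u-1)(v-1)-(v-1)(u-1)\bigr)
\]
to express $t_k-1$ as a $\mathbb{Z}[G]$-linear combination of products of $k$ elements from $\{t_i-1,t_i^{-1}-1\}$. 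Combining with the relations $f(t_i)=0$, which allow reduction of high powers of $t_i-1$ via the polynomial $f(1+X)$, one distills a monic polynomial $h(X)$ of $n$-bounded degree such that $h(t_k)M=0$.

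The main obstacle is (ii): since $\End(M)$ is non-commutative, knowing each $t_i^{\pm1}$ is integral of degree $2n$ does not automatically yield an integral relation for the complicated commutator $t_k$. Extracting an $n$-bounded monic polynomial annihilating $t_k$, uniformly over all choices of $t_1,\ldots,t_k\in T$, is the technical heart and likely requires a filtration argument on the augmentation ideal paired with specific identities for the ring commutator $uv-vu$ on integral elements.
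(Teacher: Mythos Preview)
Your treatment of the preliminary claim and of (i) is close to the paper's, though the paper avoids your ``descent along the lower central series'' by invoking a result of Segal (Proposition~1.2.7 in Ref.~\cite{Segal}) which gives directly, in a finite $p$-group generated by $t_1,\ldots,t_d$, a decomposition $y=[g_1,t_1]\cdots[g_d,t_d]$ with $g_i\in\gamma_{k-1}(G)$ for every $y\in\gamma_k(G)$. Your descent is not obviously cost-free: the $\gamma_{k+1}(G)$-correction would naively be expressed via $T_{k+1}$ rather than $T_k$, and iterating this spoils the bound $md^{k-1}$.

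The real gap is in (ii). Your plan attacks the $k$-fold commutator $[t_1,\ldots,t_k]$ head-on in $\mathbb{Z}[G]$, and you yourself flag the obstruction: the images of the $t_i$ in $\End(A)$ need not commute, so integrality of each $t_i$ does not propagate to $t_k$. Even if a filtration trick rescued this, the degree bound it produces would depend on $k$, whereas the statement demands an \emph{$n$-bounded} (hence $k$-independent) polynomial $h$; this matters, since the theorem is later applied for varying $k$.

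The paper sidesteps both issues with a single observation. Write $t_k=[u,t]$ with $u\in T_{k-1}$ and $t\in T$; then $t_k=(t^u)^{-1}\,t$. By normality of $T$ one has $(t^u)^{-1}\in T^{-1}$, so \emph{both} factors are annihilated by the monic $f$ of degree $2n$ coming from Lemma~\ref{action on abelian}. Moreover, since $u\in\gamma_{k-1}(G)$ we have $(t^u)^{-1}\equiv t^{-1}\pmod{\gamma_k(G)}$, hence $[(t^u)^{-1},t]\in\gamma_{k+1}(G)$, which acts trivially on the abelian section $A=\gamma_{k+1}(G)/\gamma_{k+1}(G)'$. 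Thus $(t^u)^{-1}$ and $t$ \emph{commute} as endomorphisms of $A$, and the problem collapses to a two-variable commutative integrality argument: $\mathbb{Z}[X_1,X_2]/(f(X_1),f(X_2))$ is free of rank $(2n)^2$ over $\mathbb{Z}$, so there is a monic $h\in\mathbb{Z}[X]$ of degree at most $(2n)^2$ with $h(X_1X_2)\in(f(X_1),f(X_2))$, giving $h(t_k)=0$ on $A$ uniformly in $k$. The case $t_k^{-1}=[t,u]$ is handled symmetrically.
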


\begin{proof}
Of course, $T_k$ is a normal subset of $G$, and the proof that $T_k$ generates
$\gamma_k(G)$ is routine.

(i)
We argue by induction on $k$.
The result is obvious for $k=1$, so we assume next that $k\ge 2$.
By the Burnside Basis Theorem, we can choose $t_1,\ldots,t_d\in T$ such that
$G=\langle t_1,\ldots,t_d \rangle$.
If $y$ is an arbitrary element of $\gamma_k(G)$, we can write
\begin{equation}
\label{segal}
y = [g_1,t_1] \ldots [g_d,t_d],
\quad
\text{for some $g_i\in\gamma_{k-1}(G)$},
\end{equation}
by using Proposition 1.2.7 of  Ref.~\cite{Segal}.
Now, if $g$ is an arbitrary element of $\gamma_{k-1}(G)$, then by the induction
hypothesis, we have $g=u_1\ldots u_s$ for some $u_i\in T_{k-1}\cup T_{k-1}^{-1}$, where
$s\le md^{k-2}$.
Then, for every $t\in T$, we have
\[
[g,t] = [u_1,t]^{u_2\ldots u_s} \ldots [u_{s-1},t]^{u_s} [u_s,t].
\]
If $u_i\in T_{k-1}$, then $[u_i,t]\in T_k$; on the other hand, if $u_i\in T_{k-1}^{-1}$ then
\[
[u_i,t] = \left( [u_i^{-1},t]^{u_i} \right)^{-1}
\]
is an element of $T_k^{-1}$.
Thus $[g,t]$ is a product of at most $s$ elements of $T_k\cup T_k^{-1}$, and it follows from
(\ref{segal}) that $y$ is a product of no more than $ds$ elements of $T_k\cup T_k^{-1}$.
This completes the proof of (i).

(ii)
Set $A=\gamma_{k+1}(G)/\gamma_{k+1}(G)'$.
By Lemma \ref{action on abelian}, there exists a monic polynomial
$f(X)\in\mathbb{Z}[X]$ of degree $2n$ such that $f(t)$ annihilates $A$ for
every $t\in T\cup T^{-1}$.

Let $I$ be the ideal of $\mathbb{Z}[X_1,X_2]$ generated by $f(X_1)$ and
$f(X_2)$.
Since $f$ is monic, the quotient ring $R=\mathbb{Z}[X_1,X_2]/I$ is a
finitely generated $\mathbb{Z}$-module, generated by the images of the
monomials $X_1^iX_2^j$ with $0\le i,j\le 2n-1$.
By Theorem 5.3 in Chapter VIII of  Ref.~\cite{Hungerford}, $R$ is integral over
$\mathbb{Z}$.
In particular, there exists a monic polynomial $h(X)\in \mathbb{Z}[X]$
such that $h(X_1X_2)\in I$.
Also, by examining the proof of that result in Ref.~\cite{Hungerford}, it is
clear that the degree of $h(X)$ is at most $(2n)^2$.

Now, let $[u,t]$ be an arbitrary element of $T_k$, where $u\in T_{k-1}$
and $t\in T$.
Since $(t^u)^{-1}$ and $t$ commute modulo $A$, these elements define commuting
endomorphisms of $A$, and hence we can define a ring homomorphism
\[
\begin{matrix}
\varphi & : & \mathbb{Z}[X_1,X_2] & \longrightarrow & \End(A)
\\
& & X_1 & \longmapsto & (t^u)^{-1}
\\
& & X_2 & \longmapsto & t.
\end{matrix}
\]
Since $f((t^u)^{-1})$ and $f(t)$ are both the null endomorphism of $A$,
it follows that $f(X_1)$ and $f(X_2)$ are contained in the kernel of $\varphi$,
and so the same holds for the ideal $I$.
Hence $h(X_1X_2)\in\ker\varphi$, which means that $h([u,t])$ is the null
endomorphism of $A$.

We can similarly prove that $h([t,u])=0$ in $\End(A)$, by defining
$\psi:\mathbb{Z}[X_1,X_2]\longrightarrow \End(A)$ via the assignments $X_1\mapsto t^{-1}$
and $X_2\mapsto t^u$.
Thus $h(t_k)$ annihilates $A$ for every $t_k\in T_k\cup T_k^{-1}$.
\end{proof}

Finally, for a certain $k$, we are able to get an Engel action of all $k$-th
powers of the elements of $G$ on some abelian normal sections of $G$.

\begin{Thm}
\label{unipotent action of a power}
Let $G$ be a finite $p$-group generated by a normal subset $T$ which
has width $m$.
Suppose that $A$ is an abelian normal section of $G$ such that the elements of $T$
commute pairwise modulo $A$, and that for some monic polynomial $f(X)\in\mathbb{Z}[X]$,
$f(t)$ annihilates $A$ for all $t\in T\cup T^{-1}$.
Then:
\begin{itemize}
\item[(i)]
There exists an $\{m,f\}$-bounded integer $r$ such that
$[A,_r g]\le A^p$ for every $g\in G$.
\item[(ii)]
There exist $\{m,f\}$-bounded integers $n$ and $k$ such that
$[A,_n g^k]=1$ for every $g\in G$.
\end{itemize}
\end{Thm}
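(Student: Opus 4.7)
The plan is to handle (i) and (ii) by first extracting, for every $g\in G$, a single monic polynomial relation satisfied by $\bar g$ on $A$, and then specialising the argument to work modulo $A^p$ for (i) and on all of $A$ for (ii).

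For the setup: since $T$ has width $m$, every $g\in G$ is a product of at most $m$ elements of $T\cup T^{-1}$, whose images in $\End(A)$ commute pairwise (by hypothesis) and are each annihilated by $f$. Repeating the Noether-normalization argument from the proof of Theorem~\ref{Tk of finite width}(ii)---the ring $\mathbb{Z}[X_1,\dots,X_m]/(f(X_1),\dots,f(X_m))$ is a finitely generated $\mathbb{Z}$-module, hence integral over $\mathbb{Z}$---yields a monic polynomial $h(X)\in\mathbb{Z}[X]$ of $\{m,f\}$-bounded degree with $h(\bar g)=0$ on $A$ for every $g\in G$. For (i), the quotient $A/A^p$ is an $\mathbb{F}_p$-vector space on which the finite $p$-group $G$ acts linearly, so every $\bar g$ acts unipotently: its minimal polynomial on $A/A^p$ divides $(X-1)^{p^s}$ for some~$s$. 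Combined with $h(\bar g)\equiv 0\pmod p$, this minimal polynomial divides $(X-1)^e$, where $e\le\deg h$ is the multiplicity of $X-1$ in the reduction $\overline h$. Hence $(\bar g-1)^{\deg h}=0$ on $A/A^p$, that is, $[A,_r g]\le A^p$ with $r=\deg h$.

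For (ii), the goal is nilpotency on all of $A$, achieved by passing to a bounded power. Let $k$ be the least common multiple of the orders of the roots of $h$ that are roots of unity; since any primitive $N$-th root of unity root of $h$ satisfies $\varphi(N)\le\deg h$, the integer $k$ is $\{m,f\}$-bounded. I would analyse the finite commutative $\mathbb{Z}_p$-algebra $R=\mathbb{Z}_p[\bar g]\subseteq\End(A)$: since $\bar g$ is an automorphism of $A$ of $p$-power order, its image in every residue field of $R$ is trivial (those multiplicative groups have order coprime to $p$), so $\bar g-1$ lies in the Jacobson radical. By Hensel's lemma $R$ is controlled by the Hensel factor $\tilde h$ of $h$ at $X-1\bmod p$, with $\deg\tilde h\le\deg h$ and $\tilde h\equiv(X-1)^{\deg\tilde h}\pmod p$; and the roots of $\tilde h$ that actually appear in the action come from $p$-power roots of unity among the roots of $h$ (through the extra relation $\bar g^{p^s}=1$ on $A$), so all have orders $p^e\le 2\deg h$ dividing $k$. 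This should force $(\bar g^k-1)^n$ to annihilate $A$ for $n=\deg h$.

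The hard part will be making rigorous, on a finite torsion module $A$, the heuristic ``Jordan/eigenvalue'' analysis that would be standard over a field. Bezout-type identities between cyclotomic and non-cyclotomic factors of $\tilde h$ over $\mathbb{Z}$ introduce denominators (resultants) that can be arbitrarily divisible by $p$, so the argument must be conducted $p$-adically inside $R$: the Hensel decomposition of $\tilde h$ and the vanishing of the non-cyclotomic contribution must be extracted directly from the finite-Artinian structure of $R$, and controlling the nilpotency index of $\bar g^k-1$ in $R$ independently of the exponent of $A$ is the key technical point.
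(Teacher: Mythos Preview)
Your setup and your argument for (i) match the paper's proof essentially verbatim: the same integrality trick produces a single monic $h(X)$ with $h(\bar g)=0$ on $A$ for every $g\in G$, and the B\'ezout/gcd argument over $\mathbb{F}_p$ gives $[A,_r g]\le A^p$ with $r\le\deg h$.

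For (ii), however, your plan has a genuine gap, and it is exactly the one you flag but do not resolve. Your analysis of $R=\mathbb{Z}_p[\bar g]$ uses, for a \emph{fixed} $g$, only the two relations $h(\bar g)=0$ and $\bar g^{p^s}=1$. These are not enough to force $(\bar g^k-1)^n=0$ with $\{m,f\}$-bounded $k,n$. Take $A=\mathbb{Z}/p^N\mathbb{Z}$, $\bar g=1+p$, and $h(X)=X-(1+p)$: then $h(\bar g)=0$, $\bar g$ has $p$-power order, $\bar g-1$ lies in the Jacobson radical of $R\cong\mathbb{Z}/p^N\mathbb{Z}$, and yet the only root $1+p$ of $h$ (and of your Hensel factor $\tilde h=h$) is not a root of unity at all, so your $k=1$ and $(\bar g-1)^n=p^n$ vanishes only for $n\ge N$. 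Your assertion that ``the roots of $\tilde h$ that actually appear come from $p$-power roots of unity'' is precisely the eigenvalue heuristic that fails over torsion modules; the relation $\bar g^{p^s}=1$ does \emph{not} force $\bar g$ to be (the image of) a root of unity in $\mathbb{Z}_p$.

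What you are not exploiting, and what the paper does exploit, is that $h(\bar g)=0$ holds for \emph{every} $g\in G$, hence $h(\bar g^i)=0$ for all $i\ge 1$. The paper passes to the ideal $J=\bigl(h(X^i):i\ge 1\bigr)\subset\mathbb{Z}[X]$ and invokes a lemma of Semple--Shalev: there exist $\{h\}$-bounded $q,k,\ell$ with $qX^{\ell}(X^k-1)^{\ell}\in J$. Evaluating at $\bar g$ gives $[A^{p^s},_\ell\, g^k]=1$, where $p^s\,\|\,q$; the possibly large $p$-part of $q$ is then absorbed by iterating part (i), which yields $[A^{p^i},_r g]\le A^{p^{i+1}}$ for all $i$, so that $[A,_{sr+\ell}\,g^k]=1$. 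In the toy example above this mechanism is visible: $h(X^2)-h(X)=X^2-X=X(X-1)\in J$, so one may take $q=k=\ell=1$ --- but this relies on $h(\bar g^2)=0$, which your single-$g$ analysis never invokes. If you want to rescue your $p$-adic approach, you must feed in the relations $h(\bar g^i)=0$; at that point you are essentially redoing the Semple--Shalev computation inside $R$.
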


\begin{proof}
The first part of the proof is similar to the proof of (ii) in the
last theorem.
Let us write $n$ for the degree of $f(X)$.
Consider the quotient ring $R=\mathbb{Z}[X_1,\ldots,X_m]/I$, where
$I$ is the ideal generated by the polynomials $f(X_1),\ldots,f(X_m)$.
Then $R$ is integral over $\mathbb{Z}$, and there exists a monic
polynomial $h(X)\in\mathbb{Z}[X]$ of degree at most $n^m$ such that
$h(X_1\ldots X_m)\in I$.
Now let $g$ be an arbitrary element of $G$.
Since $T$ generates $G$ and has width $m$, we can write $g=t_1\ldots t_m$
for some $t_i\in T\cup T^{-1}$.
The map $X_1\mapsto t_1,\ldots,X_m\mapsto t_m$ extends to a ring homomorphism
$\varphi:\mathbb{Z}[X_1,\ldots,X_m]\longrightarrow \End(A)$, since
the elements of $T$ commute pairwise modulo $A$.
Since $f(t_1)=\cdots=f(t_m)=0$, it follows that $I\subseteq \ker\varphi$.
Consequently,
 $$h(g)=h(t_1\ldots t_m)=\varphi(h(X_1\ldots X_m))=0.$$
Thus we have found a monic polynomial $h(X)\in\mathbb{Z}[X]$ such that
$h(g)$ annihilates $A$ for all $g\in G$.
Note that the polynomial $h(X)$ only depends on $f(X)$ and $m$, but not on the
particular element $g$ or on the section $A$.

(i)
Since $G$ is a finite $p$-group, we have $[A,_c G]=1$ for some $c$.
Let $(X-1)^r$ be the greatest common divisor of $(X-1)^c$ and $h(X)$, when
these polynomials are considered in $\mathbb{F}_p[X]$.
Since $r\le \deg h$, it follows that $r$ is $\{m,f\}$-bounded.
By B\'ezout's identity, we can write
\[
(X-1)^r = p(X)(X-1)^c + q(X)h(X),
\]
for some $p(X),q(X)\in \mathbb{F}_p[X]$.
If we consider an element $g\in G$, and substitute $g$ for $X$ in the previous
expression, then, as endomorphisms of the $\mathbb{F}_p$-vector space $A/A^p$,
we get $(g-1)^r=0$.
This means that $[A,_r g]\le A^p$, as desired.

(ii)
Let $J$ be the ideal of $\mathbb{Z}[X]$ generated by all polynomials
$h(X^i)$ with $i\ge 1$.
Then, if $j(X)\in J$, it follows that $j(g)=0$ for every $g\in G$.
By Lemma 3.3 of  Ref.~\cite{Semple}, there exist positive integers $q,k$
and $\ell$ such that
\[
qX^{\ell}(X^k-1)^{\ell}\in J,
\]
where $q,k,\ell$ depend only on $h(X)$, so only on $f(X)$ and $m$.
Then
\[
A^{qg^{\ell}(g^k-1)^{\ell}}=1,
\quad
\text{for every $g\in G$.}
\]

If $p^s$ is the largest power of $p$ which divides $q$, then $A^q=A^{p^s}$,
since $A$ is a finite $p$-group.
Also, we have $A^g=A$.
Hence
\[
A^{p^s(g^k-1)^{\ell}}=1
\]
or, what is the same,
\begin{equation}
\label{engel-like}
[A^{p^s},g^k,\overset{\ell}{\ldots},g^k]=1
\end{equation}
for every $g\in G$.

Now, it follows from part (i) that
\[
[A^{p^i},_r g]\le A^{p^{i+1}},
\quad
\text{for every $i\ge 0$, and for every $g\in G$.}
\]
This, together with (\ref{engel-like}), shows that
\[
[A,_n g^k]=1,
\quad
\text{for all $g\in G$,}
\]
where $n=sr+\ell$.
\end{proof}

\section{Proof of the Main Theorems}

We will begin by proving Theorem \ref{for general}.
In order to show that the powerful pro-$p$ group $G$ is nilpotent, we
will rely on the following two lemmas.
The first one is a classical result of Philip Hall
(see, for example, Theorem 3.26 of Ref.~\cite{Khukhro}), and
the other one says that for a finitely generated powerful pro-$p$ group
`nilpotent-by-finite' is the same as `nilpotent'.

\begin{Lem}
\label{Hall}
Let $G$ be a group, and let $N$ be a normal subgroup of $G$.
If $N$ is nilpotent of class $k$ and $G/N'$ is nilpotent of class $c$,
then $G$ is nilpotent of $\{k,c\}$-bounded class.
\end{Lem}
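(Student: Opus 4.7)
The plan is to argue by induction on the nilpotency class $k$ of $N$. The base case $k=1$ is immediate: $N'=1$ forces $G=G/N'$ to have class $c$.

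For $k \ge 2$ I would pass to $\bar G = G/\gamma_k(N)$. The image $\bar N$ has class at most $k-1$, and $\bar G/\bar N' \cong G/N'$ is still nilpotent of class $c$, because $\gamma_k(N) \le N'$. Induction then yields a $\{k,c\}$-bounded integer $M$ with $\gamma_M(G) \le \gamma_k(N)$. Since $\gamma_{M+r}(G) = [\gamma_M(G),_r G] \le [\gamma_k(N),_r G]$, it suffices to find a $\{k,c\}$-bounded $r$ with $[\gamma_k(N),_r G] = 1$.

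For this I would exploit that $\gamma_k(N)$ is abelian (being central in $N$) and that the $k$-fold commutator is multilinear in each of its $N$-arguments modulo $\gamma_{k+1}(N)=1$. Applied to a generator $x = [n_1,\ldots,n_k]$, iteration of the identity $[n_1,\ldots,n_k]^g = [n_1^g,\ldots,n_k^g]$ together with the multilinear expansion of $n_i^g = n_i[n_i,g]$ writes $[x,g_1,\ldots,g_r]$ modulo $\gamma_{k+1}(N)$ as a product of $k$-fold commutators $[m_1,\ldots,m_k]$, where each $m_i$ is a left-normed commutator of $n_i$ with some $s_i$ of the elements $g_j$, and every $g_j$ lands in at least one slot, so that $s_1+\cdots+s_k \ge r$. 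Taking $r = k(c-1)+1$ forces some $s_i \ge c$ by pigeonhole; for that $i$ one has $m_i \in \gamma_{c+1}(G) \cap N \le N' = \gamma_2(N)$, by the hypothesis $\gamma_{c+1}(G) \le N'$ and normality of $N$. Then one factor of $[m_1,\ldots,m_k]$ lies in $\gamma_2(N)$ and the others in $N$, putting the whole product in $\gamma_{k+1}(N)=1$. Hence $[\gamma_k(N),_{k(c-1)+1} G]=1$, and $G$ has $\{k,c\}$-bounded class.

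The main obstacle is the multilinear commutator expansion: one must verify that, modulo $\gamma_{k+1}(N)$, the $k$-fold commutator is linear in each $N$-slot, so that iterating $[\,\cdot\,,g_j]$ distributes term-by-term across the $k$ slots and produces the claimed product structure with the appropriate control on $s_1+\cdots+s_k$. Once this bookkeeping is in place, the weight argument and the combination with the inductive step are essentially routine.
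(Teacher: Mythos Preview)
Your argument is correct. The paper itself does not prove this lemma: it is stated as a classical result of Philip Hall, with a pointer to Theorem~3.26 of Khukhro~\cite{Khukhro}, so there is no in-paper proof to compare against.

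Your approach---induction on $k$ to reduce to showing $[\gamma_k(N),_r G]=1$ for a $\{k,c\}$-bounded $r$, followed by the weight/pigeonhole argument via the multilinear expansion of $[n_1,\ldots,n_k]$---is a valid direct proof and is essentially one of the standard routes to Hall's theorem. The ``main obstacle'' you flag (multilinearity of the $k$-fold commutator in each $N$-entry modulo $\gamma_{k+1}(N)$) is a textbook fact, obtained by iterating the identities $[ab,c]=[a,c]^b[b,c]$ and $[a,bc]=[a,c][a,b]^c$; once that is in hand, the bookkeeping you describe goes through exactly as you outline. The bound you obtain is not the sharpest known, but for the purposes of this paper only the existence of some $\{k,c\}$-bound is needed.
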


\begin{Lem}
\label{nilpotent-by-finite powerful}
Let $G$ be a finitely generated powerful pro-$p$ group.
If $G$ has a normal subgroup $N$ of finite index which is nilpotent of
class $c$, then $G$ itself is nilpotent of $\{c,e\}$-bounded class,
where $e$ is the exponent of $G/N$.
\end{Lem}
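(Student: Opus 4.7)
The plan is to induct on $a$, where $e = p^a$ (note that $e$ is necessarily a power of $p$, since $G/N$ is a finite quotient of the pro-$p$ group $G$). The base case $a = 0$ is immediate: $G = N$ is already nilpotent of class $c$.

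For the inductive step, I would pass to $H = G^p$. By standard properties of powerful pro-$p$ groups (Chapter~3 of Ref.~\cite{DDMS}), $H$ is again a finitely generated powerful pro-$p$ group, and every element of $H$ is a $p$-th power of an element of $G$. Setting $M = H \cap N$, one sees that $M$ is normal in $H$ and nilpotent of class at most $c$, while $H/M$ embeds in $G/N$. Moreover, for $h = g^p \in H$ we have $h^{p^{a-1}} = g^{p^a} \in N$, so the exponent of $H/M$ divides $p^{a-1}$. The induction hypothesis applied to $H$ then yields that $H$ is nilpotent of some class $c'$ bounded in terms of $c$ and $a$.

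The final step is to lift nilpotency from $H = G^p$ back to $G$, with a bound that must be independent of the number of generators of $G$. I would invoke Hall's Lemma~\ref{Hall} taking $H$ as the normal subgroup: since $H$ is nilpotent of class $c'$, it suffices to prove that $G/H'$ is nilpotent of class bounded in terms of $c'$ and $a$. Because $G$ is powerful, $[G,G] \leq G^p = H$, so $\gamma_k(G) \leq H$ for every $k \geq 2$, and the derived subgroup $H' = [G^p,G^p]$ can be controlled using the commutator-power identities available in powerful pro-$p$ groups (which relate $[G^{p^i},G^{p^j}]$ to higher terms of the $p$-power series, with the usual $4$-adjustment when $p = 2$). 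A careful analysis of the lower central series of $G/H'$ using these identities would give the required nilpotency, and Hall's Lemma then delivers the $\{c,a\}$-bounded class for $G$.

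The hard part will be this last step: the bound on the class of $G$ in terms of that of $G^p$ must be uniform in the rank of $G$, which rules out any naive extension argument based on the size of the finite quotient $G/G^p$. One must instead exploit the specific interactions between powers and commutators that are special to powerful pro-$p$ groups, using them to tame the action of $G/H$ on the abelian section $H/H'$.
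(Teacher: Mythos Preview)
Your outline has a genuine gap: the ``hard part'' you identify at the end is precisely the whole content of the lemma, and you do not carry it out. You need to show that $G/(G^p)'$ has nilpotency class bounded independently of the rank of $G$, and gesturing at ``commutator-power identities'' is not a proof. In fact the identities you allude to (of the type $[G^{p^i},G^{p^j}]\le G^{p^{i+j}}$) go in the wrong direction: they give \emph{upper} bounds for $(G^p)'$, whereas what you need is a \emph{lower} bound, namely $\gamma_k(G)\le (G^p)'$ for some absolute $k$. This can indeed be done---since $G'$ is powerfully embedded one gets $\gamma_4(G)\le (G')^{p^2}$, and the identities of Ref.~\cite{FGJ} give $(G')^{p^2}=[G^p,G^p]=(G^p)'$ for $p$ odd, so $G/(G^p)'$ has class at most~$3$---but you have not supplied this argument, and without it the induction never closes.

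More importantly, once you are willing to invoke the results of Ref.~\cite{FGJ}, the induction on $a$ and the appeal to Hall's Lemma~\ref{Hall} are unnecessary detours. The paper argues directly: since $G^e\le N$ is nilpotent of class at most $c$, Proposition~3.2 and Corollary~3.5 of Ref.~\cite{FGJ} give
\[
[G^{e^{c+1}},G,\overset{c}{\ldots},G]=\gamma_{c+1}(G)^{e^{c+1}}=\gamma_{c+1}(G^e)=1,
\]
and since $\gamma_{i+1}(G)\le G^{p^i}$ in a powerful group, one has $\gamma_{k+1}(G)\le G^{e^{c+1}}$ for $k=a(c+1)$ (writing $e=p^a$), whence $\gamma_{k+c+1}(G)=1$. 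This yields the explicit bound $a(c+1)+c$ for the class in a single step, with no induction and no Hall-type amplification of constants.
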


\begin{proof}
We prove the result for $p>2$.
For $p=2$, the same proof applies with some little changes.

It follows from the hypotheses that $G^e$ is nilpotent of class at
most $c$.
By Proposition 3.2 and Corollary 3.5 in Ref.\ \cite{FGJ}, we get
\begin{equation}
\label{long commutator}
[G^{e^{c+1}},G,\overset{c}{\ldots},G]
=
[G,\overset{c+1}{\ldots},G]^{e^{c+1}}
=
[G^e,\overset{c+1}{\ldots},G^e]
=
1.
\end{equation}
On the other hand, since $G$ is powerful, we have
$\gamma_{i+1}(G)\le G^{p^i}$ for all $i\ge 1$.
As a consequence, for some $\{c,e\}$-bounded integer $k$
we have $\gamma_{k+1}(G)\le G^{e^{c+1}}$.
This, together with (\ref{long commutator}), shows that
$G$ is nilpotent of class at most $k+c$, and we are done.
\end{proof}

Note that we could have written the previous lemma under the
apparently weaker assumption that the exponent of $G/N$ is finite,
rather than $N$ being of finite index in $G$.
However, if $G$ is a finitely generated powerful pro-$p$ group, these
two conditions are equivalent: if $\exp G/N=p^k$, then $G^{p^k}$ is
contained in $N$, and then by Theorem 3.6 of Ref.~\cite{DDMS}, we have
$|G:N|\le |G:G^{p^k}|\le p^{kd}$, where $d$ is the minimum number of
generators of $G$ as a topological group.
(In fact, the assumption that $G$ should be powerful is not necessary
for this equivalence, since $|G:G^{p^k}|$ is finite for every finitely
generated pro-$p$ group.
But this is a much deeper result, which needs Zelmanov's positive
solution of the Restricted Burnside Problem.)

\vspace{10pt}

We also need the following result of Black (see
Corollary 2 in Ref.~\cite{Black}).

\begin{Thm}
\label{black}
Let $G$ be a finite group of rank $r$ satisfying a law $v\equiv 1$.
Then, there exists an $\{r,v\}$-bounded number $k$ such that
$\gamma_k((G^{k!})')=1$.
In particular, if $G$ is soluble, then the derived length of $G$
is $\{r,v\}$-bounded.
\end{Thm}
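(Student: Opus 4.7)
The plan is to split $G$ into its ``non-soluble'' and ``soluble'' parts, exploiting both the rank bound and the law simultaneously. At the outset I would invoke the theorem of G.~A.~Jones, a consequence of the Classification of Finite Simple Groups, asserting that any non-trivial variety contains only finitely many non-abelian finite simple groups. Combined with the fact that a finite simple group of rank at most $r$ has $r$-bounded order, this shows that the non-abelian composition factors of $G$ belong to a finite set depending only on $v$, and their multiplicity in any composition series is $\{r,v\}$-bounded. Therefore the soluble radical $S(G)$ has $\{r,v\}$-bounded index $e$ in $G$. Choosing $k$ so that $e\mid k!$ forces $G^{k!}\le S(G)$; the group $G^{k!}$ is then soluble, still of rank at most $r$, and still satisfies $v\equiv 1$.

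It then suffices to prove the result for a finite soluble group $H=G^{k!}$ of rank $r$ in the variety $v\equiv 1$, showing that $H'$ is nilpotent of $\{r,v\}$-bounded class. The plan here is three-fold: first, use a Kov\'acs-type theorem to bound the Fitting length of $H$ in terms of $r$; second, on each Fitting factor, which is essentially a product of $p$-groups of rank $r$ lying in a locally finite variety, apply Zel'manov's positive solution of the Restricted Burnside Problem to bound the nilpotency class (the law $v\equiv 1$ supplies the required exponent or Engel-type condition); third, employ Hall--Higman-style coprime action arguments to assemble these class bounds into a nilpotency bound for the entire commutator subgroup $H'$, possibly after absorbing a further $\{r,v\}$-bounded power into $k!$.

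The main obstacle is the third step. Bounding the individual Fitting factors and the Fitting length is relatively standard once Zel'manov's theorem is available, but controlling the nilpotency class of the whole derived subgroup, rather than merely its derived length, is delicate: successive Fitting layers can inflate the class bound if the action across layers is not carefully managed. This is precisely why Zel'manov's work enters essentially, and it also explains why the exponent-power $G^{k!}$ appears in the conclusion rather than $G$ itself: passing to $G^{k!}$ simultaneously kills the non-soluble quotient from the first step and suppresses the small-prime pathologies that would otherwise obstruct the coprime-action bookkeeping used to consolidate the class bound.
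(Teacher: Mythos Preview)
The paper does not actually prove this theorem: it is quoted as a result of Black (Corollary~2 of Ref.~\cite{Black}) and used as a black box. The only argument the paper supplies is for the ``In particular'' clause, and it is one sentence long: once $\gamma_k((G^{k!})')=1$ is known, the subgroup $(G^{k!})'$ is nilpotent of bounded class, hence $G^{k!}$ has bounded derived length; and by Zel'manov's solution of the Restricted Burnside Problem the quotient $G/G^{k!}$ has bounded order, hence bounded derived length, so $G$ does as well.

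Your proposal, by contrast, is a sketch of how one might prove Black's theorem itself. That is a legitimate and much more ambitious undertaking, and the broad architecture you describe---reduce to the soluble radical via Jones' theorem and a rank bound on composition factors, then bound the Fitting length, then control each Fitting layer via Zel'manov---is in the right spirit and close to how results of this type are actually established. But it is not what the paper does, and you yourself flag that the third step (assembling a class bound for $H'$ from bounds on the Fitting layers) is where the real work lies; your sketch does not supply that work. If the goal is to match the paper, you should simply cite Black and give the one-line RBP deduction for the soluble case. If the goal is an independent proof, your outline is reasonable but incomplete at exactly the point you identify.
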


Note that the positive solution to the Restricted Burnside Problem is needed
for the conclusion in the soluble case: thus we know that the quotient $G/G^{k!}$
has bounded order, and so also bounded derived length.

We can now proceed to the proof of Theorem \ref{for general}.

\begin{proof}[Proof of Theorem \ref{for general}]
Suppose that the result is known for $G$ a finite $p$-group, so that all finite $p$-groups
satisfying the conditions of the theorem have nilpotency class at most $c$, for some bounded
number $c$.
Now if $G$ is a pro-$p$ group as in the statement of the theorem, and $N$ is an arbitrary open
normal subgroup of $G$, it follows that $\gamma_{c+1}(G)\le N$.
Thus necessarily $\gamma_{c+1}(G)=1$ and the result is valid also for pro-$p$ groups.

Hence we may assume that $G$ is a $d$-generator finite powerful $p$-group.
By Theorem 11.18 of Ref.~\cite{Khukhro}, it follows that $G$ has rank $d$, i.e.\ that
every subgroup of $G$ can be generated by $d$ elements.
Since $G$ satisfies the law $v\equiv 1$, by Theorem \ref{black} we have $G^{(s)}=1$ for some
bounded number $s$.
We argue by induction on $s$.

If $s\le 2$, i.e.\ if $G$ is metabelian, then the elements of $T$ commute pairwise
modulo $G'$.
Choose generators $g_1,\ldots,g_d$ of $G$.
By Lemma \ref{action on abelian} and Theorem \ref{unipotent action of a power}, since $T$
satisfies a positive law, we know that there exist bounded numbers $n$ and $k$ such that
$[G',_n g_i^k]=1$ for all $i=1,\ldots,d$.
As a consequence, the subgroups $\langle g_i^k,G' \rangle$ have bounded nilpotency class.
Thus $G^kG'=\langle g_1^k,\ldots,g_d^k,G' \rangle$ is the product of $d$ normal subgroups
of bounded class, and so has bounded class itself.
Since $|G:G^kG'|\le k^d$, it follows from Lemma \ref{nilpotent-by-finite powerful} that
$G$ has bounded nilpotency class.
This concludes the proof in the metabelian case.

Assume now that $s\ge 3$.
We claim that the nilpotency class of $G/\gamma_{k+1}(G)'$ is bounded for all
$k\ge 1$ (here, we must also take $k$ into account for the bound).
The result is true for $k=1$, according to the last paragraph.
Now we argue by induction on $k$.
By Theorem \ref{Tk of finite width}, $T_k$ is a normal set of generators of $\gamma_k(G)$
of bounded width.
Also, the elements of $T_k$ commute pairwise modulo $\gamma_{k+1}(G)$.
On the other hand, by (ii) of Theorem \ref{Tk of finite width}, there exists a monic
polynomial $h(X)\in\mathbb{Z}[X]$ such that $h(t_k)$ annihilates the abelian normal section
$A=\gamma_{k+1}(G)/\gamma_{k+1}(G)'$ for every $t_k\in T_k$.
Thus we may argue as in the metabelian case above with the group $Q=\gamma_k(G)/\gamma_{k+1}(G)'$
and deduce that $Q$ has bounded nilpotency class.
Since $G/\gamma_k(G)'$ has also bounded class by the induction hypothesis, the claim follows
from Lemma \ref{Hall}.

Now that the claim is proved, the result easily follows.
Indeed, since $G/G^{(s-1)}$ has bounded class by induction, there exists a bounded integer $\ell$
such that $\gamma_{\ell+1}(G)\le G^{(s-1)}$.
Hence $\gamma_{\ell+1}(G)'=1$, and $G$ has bounded class by the previous claim.
\end{proof}

Now Theorem \ref{for verbal} follows readily from Theorem \ref{for general}.

\begin{proof}[Proof of Theorem \ref{for verbal}]
As already mentioned, the set $G_w$ of all values of $w$ in $G$ is a
normal subset of $G$, and in particular of $w(G)$.
Also, by Theorem 1.3 of Ref.\ \cite{Jaikin}, $G_w$ has finite width,
say $m$.

Let $\alpha\equiv\beta$ be the positive law satisfied by the set $G_w$,
and suppose that the number of variables used in the law $\alpha\equiv\beta$
and in the word $w$ is $k$ and $\ell$, respectively.
Now, consider $kl$ arbitrary elements $g_1,\ldots,g_{kl}$ of $G$.
Since the $k$ elements $w(g_1,\ldots,g_{\ell}),\ldots,
w(g_{(k-1)\ell+1},\ldots,g_{k\ell})$ satisfy the law $\alpha\equiv\beta$,
it follows that
\begin{multline*}
\alpha(w(g_1,\ldots,g_{\ell}),\ldots, w(g_{(k-1)\ell+1},\ldots,g_{k\ell}))
=
\\
\beta(w(g_1,\ldots,g_{\ell}),\ldots, w(g_{(k-1)\ell+1},\ldots,g_{k\ell})).
\end{multline*}
This means that the group $G$ satisfies a law $v\equiv 1$, where $v$ is a
word which depends only on $w$ and on the positive law $\alpha\equiv\beta$.
In particular, the law $v\equiv 1$ is also satisfied by $w(G)$.

Now, we can apply directly Theorem \ref{for general} to the group $w(G)$
and the generating set $G_w$, in order to conclude that $w(G)$ is nilpotent.
\end{proof}

\section*{Acknowledgments}
The authors are supported by the Spanish Government, grant MTM2008-06680-C02-02,
partly with FEDER funds, and by the Department of Education, Universities and Research
of the Basque Government, grants IT-252-07 and IT-460-10.
The second author is also supported by a grant of the University of L'Aquila.

\end{document}